\documentclass[11pt,a4paper]{amsart}

\usepackage{amsmath,amssymb,amscd}
\usepackage{a4wide}

\newtheorem{thm}{Theorem}

\newtheorem{cor}{Corollary}

\theoremstyle{definition}

\newtheorem{remark}{Remark}

\newcommand{\ts}{\hspace{0.5pt}}
\newcommand{\nts}{\hspace{-0.5pt}}

\newcommand{\R}{\mathbb{R}\ts}

\newcommand{\Z}{\mathbb{Z}}
\newcommand{\N}{\mathbb{N}}

\newcommand{\cD}{\mathcal{D}}
\newcommand{\cL}{\mathcal{L}}

\newcommand{\Per}{\mathrm{Per}}
\newcommand{\oplam}{\mbox{\Large $\curlywedge$}}
\newcommand{\exend}{\hfill $\Diamond$}

\begin{document}

\title{Toeplitz flows and model sets}

\author{M.~Baake}
\address{Faculty of Mathematics, Bielefeld University, Germany}
\email{mbaake@math.uni-bielefeld.de}

\author{T.~J\"ager}
\address{Institute of Mathematics, Friedrich Schiller University Jena, Germany}
\email{tobias.jaeger@uni-jena.de}

\author{D.~Lenz}
\address{Institute of Mathematics, Friedrich Schiller University Jena, Germany}
\email{daniel.lenz@uni-jena.de}

\subjclass[2010]{52C23 (primary), 37B50, 37B10 (secondary)}

\begin{abstract}
  We show that binary Toeplitz flows can be interpreted as Delone
  dynamical systems induced by model sets and analyse the quantitative
  relations between the respective system parameters. This has a
  number of immediate consequences for the theory of model sets. In
  particular, we use our results in combination with special examples
  of irregular Toeplitz flows from the literature to demonstrate that
  irregular proper model sets may be uniquely ergodic and do not need
  to have positive entropy. This answers questions by Schlottmann and
  Moody.
\end{abstract}



\maketitle

\section{Introduction}

Toeplitz flows have played an important role in the development of
ergodic theory, since they provide a wide class of minimal dynamical
systems that may exhibit a variety of exotic properties. Accordingly,
they have often been employed to clarify fundamental questions on
dynamical systems and to provide examples for particular combinations
of dynamical properties, such as strict ergodicity and positive entropy
or minimality and absence of unique ergodicity \cite{JK69,MP79,Wil84}.

The aim of this article is to make this rich source of examples
available in the context of aperiodic order --- often referred to as
the mathematical theory of quasicrystals --- and more specifically
for the study of (general) cut and project sets. Cut and project
sets, introduced by Meyer \cite{Mey} in a somewhat different
context, are arguably the most important class of examples within
the theory of quasicrystals. With a focus on proper (but not
necessarily regular)  model sets their investigation has become a
cornerstone of the theory, see e.g. the survey papers
\cite{Moo1,Moo2}. Recently, also the more general  classes of
repetitive Meyer sets \cite{Auj,KS} (see the survey \cite{ABKL} as
well) and of weak model sets \cite{BHS,KR} have been studied in some
detail.

It is an elementary observation that any two-sided repetitive (or almost
periodic) sequence $\xi=(\xi_n)^{}_{n\in\Z}$ of symbols 0 nd 1 can be identified
with a Delone subset of $\Z$, for instance the one given by
$\cD(\xi)=\{n\in\Z\mid \xi_n=1\}$. Our main result, Theorem \ref{t.Toeplitz-CPS}
in Section \ref{section:main}, shows that if $\xi$ is a Toeplitz sequence, then
$\cD(\xi)$ can always be interpreted as a model set arising from a cut and
project scheme (CPS). Not surprisingly, the internal group of this CPS is chosen
as the odometer associated with the Toeplitz sequence.

Moreover, we provide a quantitative analysis and show that the
regularity and the scaling exponents of the Toeplitz sequence can be
computed either in terms of the measure of the boundary of the windows
in the CPS (in the case of irregular Toeplitz flows, Section
\ref{section:main}) or in terms of the box dimension of this boundary
(for regular Toeplitz flows, Section \ref{section:regular}). This ties
together the principal quantities of both system classes and
immediately allows to answer a number of questions on model sets,
which were -- to the best of our knowledge -- still open.

In particular, we thus obtain that a positive measure of the boundary
of the window of a cut and project set does imply neither positive
topological entropy, nor the existence of multiple ergodic measures
(Section \ref{section:irregular}).  Moreover, using Toeplitz examples
due to Downarowicz and Lacroix \cite{DL96}, we can then demonstrate
that model sets may have any given countable subgroup of $\R$ as their
dynamical spectrum, provided it contains infinitely many rationals. In
particular, irrational eigenvalues may occur despite the fact that the
underlying odometer has only a rational point spectrum, and this further
implies the existence of non-continuous eigenfunctions.

It is worth mentioning that the model sets provided by our
construction are minimal and  satisfy the  additional regularity
feature of properness. In particular, they fall into both the class
of repetitive Meyer sets and the class of weak model sets mentioned
above.

Our construction below deals with  binary Toeplitz sequences, as
this suffices to provide the desired counterexamples. However,
clearly, similar results  can be achieved    for Toeplitz sequences
over larger alphabets than $\{0,1\}$. Indeed, for the purposes of
the present paper it is not hard to reduce the case of arbitrary
Toeplitz sequences to binary sequences by identifying all letters
but one.

\section{Binary Toeplitz sequences and model
sets}\label{section:main}

Let us start by discussing Toeplitz sequences and flows. For background and
references we refer the reader to \cite{Do05}.  Let $\varSigma=\{0,1\}^\Z$ and
denote by $\sigma \! : \, \varSigma\xrightarrow{\quad}\varSigma$ the left shift.
Suppose $\xi\in\varSigma$ is a Toeplitz sequence, which means that for all
$k\in\Z$ there exists $p\in\N$ such that $\xi_{k+np}=\xi_k$ for all
$n\in\Z$. Then, the shift orbit closure
$\varSigma_\xi=\overline{\{\sigma^n(\xi)\mid n\in\Z\}}$ is a minimal set for the
shift $\sigma$, and $(\varSigma_\xi,\sigma)$ is called the \emph{Toeplitz flow}
generated by $\xi$. Elements of $\varSigma_\xi$ that are not Toeplitz sequences
themselves are called \emph{Toeplitz orbitals}. The latter necessarily exist in
any $\varSigma_\xi$ that is built from a non-periodic Toeplitz sequence $\xi$;
cf.\ \cite[Cor.~4.2]{TAO}. In the remainder of this paper, we shall restrict
our attention to non-periodic Toeplitz sequences.

In line with the standard literature, we call
\[
  \Per(p,\xi) \, = \, \{k\in\Z\mid \xi_k=\xi_{k+np} \;\:
   \mbox{for all}\;\:  n\in\Z\}
\]
the \emph{$p$-skeleton} of $\xi$ and refer to its elements as
\textit{$p$-periodic positions} of $\xi$. A $p\in\N$ is an
\emph{essential period} of $\xi$ if $\Per(p',\xi)\neq\Per(p,\xi)$ for
all $p'<p$. A \emph{period structure} for $\xi$ is a sequence
$(p^{}_\ell)^{}_{\ell\in\N}$ of essential periods such that, for all
$\ell\in\N$, $p^{}_\ell$ is an essential period of $\xi$ that divides
$p^{}_{\ell+1}$ and that, together, they satisfy $\bigcup_{\ell\in\N}
\Per(p^{}_\ell,\xi)=\Z$. Such a sequence always exists and can be
obtained, for example, by defining $p^{}_\ell$ as the multiple of all
essential periods occurring for the positions in $[-\ell, \ldots ,
\ell]$. The \emph{density} of the $p$-skeleton is defined as
\[
   D(p) \, = \, \# \bigl(\Per(p,\xi)\cap [0,p-1]\bigr)/p \ts .
\]
A Toeplitz sequence and the associated flow are called
\emph{regular}, if $\lim_{\ell\to\infty} D(p^{}_\ell)=1$ and
\emph{irregular} otherwise. This distinction turns out to be
independent of the choice of the period structure.

Given a period structure $(p^{}_\ell)_{\ell\in\N}$, we let $q^{}_\ell
=p^{}_\ell/p^{}_{\ell-1}$ for $\ell\geq 1$, with the convention that
$p^{}_0=1$. Then, the compact Abelian group
$\varOmega=\prod_{\ell\in\N} \Z / {q^{}_\ell} \Z$ equipped with the
addition defined according to the carry over rule is called the
\emph{odometer group} with \emph{scale}
$(q^{}_\ell)^{}_{\ell\in\N}$. We denote the Haar measure on
$\varOmega$ by $\mu$ and let $\tau \! : \, \varOmega
\xrightarrow{\quad}\varOmega$ with $\omega\mapsto\omega+(1,0,0,
\ldots)$ denote the canonical minimal group rotation on
$\varOmega$. We call $(\varOmega,\tau)$ the \emph{odometer
  associated{\ts\ts}\footnote{We note that period structures of
    Toeplitz sequences are not uniquely defined, but all the odometers
    with scales corresponding to different period structures of the
    same Toeplitz sequence are isomorphic.}  to
  $(\varSigma_\xi,\sigma)$}.  This odometer $(\varOmega,\tau)$
coincides with the maximal equicontinuous factor (MEF) of
$(\varSigma_\xi,\sigma)$, and the factor map $\beta \!  :
\,\varSigma_\xi\xrightarrow{\quad}\varOmega$ can be defined by
\[
    \beta(x)\, = \, \omega
    \quad :\Longleftrightarrow \quad
    \Per\bigl(p^{}_\ell,\sigma^{k(\ell,\omega)}(x)\bigr)
     \, = \, \Per(p^{}_\ell,\xi)\ts ,
     \quad \mbox{for all}\;\: \ell\in\N \ts ,
\]
where $k(\ell,\omega)=\sum_{i=1}^\ell \omega_ip_{i-1}$; cf.\
\cite{DL96} for details.  Given $w\in\prod_{i=1}^\ell \Z / {q_i} \Z$,
we also let $k(\ell,w)=\sum_{i=1}^\ell w_ip_{i-1}$.

\begin{remark}\label{rem:alternative}
  There is an alternative equivalent description of the odometer,
  which is actually closer to considerations in the quasicrystal
  literature, as e.g. in \cite{BM}. As this is instructive in the
  context of our construction below we shortly discuss this: Let
  $\varOmega'$ be the inverse limit of the system $(\Z /
  {p^{}_\ell}\Z)^{}_{\ell \geq 0}$, so the elements of $\varOmega'$
  are the sequences $(x^{}_\ell)^{}_{\ell \geq 0}$ with $ x^{}_\ell
  \in \Z / {p^{}_\ell} \Z$ and $x^{}_{\ell -1} = \pi^{}_{\ell}
  (x^{}_\ell)$ for each $\ell\in \N$. Here, $\pi^{}_\ell \! : \, \Z /
  {p^{}_\ell} \Z \longrightarrow \Z / {p^{}_{\ell -1}}\Z $ is the
  canonical projection. Then, $\varOmega'$ is an Abelian group under
  componentwise addition, and the map
\[
   \varOmega \xrightarrow{\quad} \varOmega^{\ts \prime},\;
   \omega \mapsto ( k(\ell,\omega))^{}_{\ell \geq 0} \ts ,
\]
provides an isomorphism of topological groups. Under this
isomorphism, the map $\tau$ on $\varOmega$ corresponds to addition of
$1$ in each component of elements of $\varOmega^{\ts \prime}$.  \exend
\end{remark}

Let us now turn to CPSs and model sets, where
we refer the reader to \cite{TAO} and references therein for background and
general notions. In general, a CPS $(G,H,\cL)$ is given by a pair of
locally compact Abelian groups $G,H$ together with a discrete
co-compact subgroup $\cL$ of $G\times H$ such that $\pi:G\times H\to
G$ is injective on $\cL$ and $\pi_\mathrm{int}:G\times H\to H$ maps
$\cL$ to a dense subset of $H$. Given any subset ({\em window}) $W$ of
$ H$, such a CPS produces a subset of $G$, called a \emph{cut and
  project set}, given by
\[
   \oplam(W)\ = \ \pi\left((G\times W)\cap \cL\right) \ts .
\]
Such a set is called a \emph{model set} when $W$ is relatively compact
with non-empty interior. In this case, $\oplam(W)$ is always a Delone
set. When, in addition, the boundary $\partial W$ of the window has
zero measure in $H$, the model set is called \emph{regular}. As a
standard case, one considers compact windows $W$ that satisfy
$\varnothing \ne W = \overline{\mathrm{int}(W)}$, in which case they
are called \emph{proper}.

Since we consider Toeplitz sequences as weighted subsets of $\Z$, the
easiest way to describe them as model sets works for a CPS of the form
$(G,H,\cL)$ where $G=\Z$ or $\R$ and $H = \varOmega$ is the odometer
from above. So, we consider the situation summarised in the following
diagram,
\begin{equation*}
\renewcommand{\arraystretch}{1.2}\begin{array}{r@{}ccccc@{}l}
   & G & \xleftarrow{\,\;\;\pi\;\;\,} & \Z \times H &
        \xrightarrow{\;\pi^{}_{\mathrm{int}\;}\,} & H & \\
   & \cup & & \cup & & \cup & \hspace*{-1ex}
   \raisebox{1pt}{\text{\footnotesize dense}} \\
   & \Z & \xleftarrow{\, 1-1 \,} & \cL &
   \xrightarrow{\; \hphantom{1-1} \;} & \pi^{}_{\mathrm{int}}(\cL) & \\
   & \| & & & & \| & \\
   & L & \multicolumn{3}{c}{\nts\xrightarrow{\qquad\qquad\;\star
       \;\qquad\qquad}}
       &  {L_{}}^{\star\nts} & \\
\end{array}\renewcommand{\arraystretch}{1}
\end{equation*}
Further, $\cL$ is a lattice in $G\times H$ that emerges as a diagonal
embedding,
\begin{equation}\label{eq:def-lat}
    \cL \, := \, \{ (n, n^{\star} ) \mid n\in\Z\} \ts ,
\end{equation}
where $n^{\star} := \tau^{n} (0)$ defines the so-called $\star$-map
$\star \! : \, G=\Z \xrightarrow{\quad} H$. Clearly, the restriction
of $\pi$ to $\cL$ is one-to-one and the restriction of
$\pi^{}_{\mathrm{int}}$ has dense range. Using the $\star$-map, a
\emph{cut and project set} for $(G,H,\cL)$ and window $W$ can equally
be written as
\[
     \oplam (W) \, = \, \{ x \in L \mid x^{\star} \in W \} \ .
\]
Using these ingredients, our main result now reads as follows.
\begin{thm}\label{t.Toeplitz-CPS}
  Let\/ $\varSigma=\{0,1\}^\Z$ and suppose\/ $\xi\in\varSigma$ is a
  non-periodic Toeplitz sequence with period structure\/
  $(p^{}_\ell)^{}_{\ell\in\N}$. Let\/ $(\varOmega,\tau)$ be the
  associated odometer. Then,
\[
      \cD(\xi) \, = \, \{n\in\Z \mid \xi_n=1 \}
\]
is a model set for the CPS\/ $(\Z, \varOmega, \cL)$ or\/ $(\R,
\varOmega, \cL)$, with the lattice\/ $\cL$ of equation\eqref{eq:def-lat}
and the\/ $\star$-map defined above.  Moreover, the corresponding
window\/ $W\subseteq\varOmega$ is proper and satisfies
\[
      \mu(\partial W) \, = \,
     1-\lim_{\ell\to\infty} D(p^{}_\ell) \ts .
\]
\end{thm}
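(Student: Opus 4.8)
The plan is to build the window explicitly from the skeleta of $\xi$ and then read off its boundary. For each $\ell\in\N$, let $\pi^{}_\ell\!:\varOmega\to\Z/p^{}_\ell\Z$ be the canonical projection (the $\ell$-th coordinate in the description of Remark~\ref{rem:alternative}, i.e.\ $\omega\mapsto k(\ell,\omega)\bmod p^{}_\ell$), which is a continuous surjection carrying $\mu$ to the uniform measure. I set
\[
  A^{}_\ell=\{\,r\in\Per(p^{}_\ell,\xi)\cap[0,p^{}_\ell-1]\mid \xi^{}_r=1\,\},\qquad
  B^{}_\ell=\{\,r\in\Per(p^{}_\ell,\xi)\cap[0,p^{}_\ell-1]\mid \xi^{}_r=0\,\},
\]
and let $\widehat A^{}_\ell=\pi_\ell^{-1}(A^{}_\ell)$ and $\widehat B^{}_\ell=\pi_\ell^{-1}(B^{}_\ell)$ be the associated clopen cylinder sets in $\varOmega$. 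Because $p^{}_\ell\mid p^{}_{\ell+1}$ forces $\Per(p^{}_\ell,\xi)\subseteq\Per(p^{}_{\ell+1},\xi)$, and since $\xi$ takes a single value along each $p^{}_\ell$-periodic residue class, both families are nested, $\widehat A^{}_\ell\subseteq\widehat A^{}_{\ell+1}$ and $\widehat B^{}_\ell\subseteq\widehat B^{}_{\ell+1}$. I then define the open set $W^\circ=\bigcup_{\ell\in\N}\widehat A^{}_\ell$ and take the window to be its closure $W=\overline{W^\circ}$.

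The first task is to verify $\cD(\xi)=\oplam(W)$. For $n\in\Z$ the point $n^\star$ satisfies $\pi^{}_\ell(n^\star)=n\bmod p^{}_\ell$, so $n^\star\in\widehat A^{}_\ell$ precisely when the residue of $n$ is $p^{}_\ell$-periodic with value $1$, which by periodicity means $\xi^{}_n=1$. Since $\bigcup_\ell\Per(p^{}_\ell,\xi)=\Z$, every $n$ is $p^{}_\ell$-periodic for some $\ell$, whence $\xi^{}_n=1\Leftrightarrow n^\star\in W^\circ$ and, symmetrically, $\xi^{}_n=0\Leftrightarrow n^\star\in U:=\bigcup_\ell\widehat B^{}_\ell$. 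In particular, for such $n$ one has $n^\star\in\widehat A^{}_\ell\cup\widehat B^{}_\ell\subseteq W^\circ\cup U$, so no lattice point is mapped into $C:=\varOmega\setminus(W^\circ\cup U)$. As I show $C=\partial W$ below, this gives $\{n\in\Z\mid n^\star\in W\}=\{n\in\Z\mid n^\star\in W^\circ\}=\cD(\xi)$, so that $\cD(\xi)=\oplam(W)$ for both $G=\Z$ and $G=\R$ (the two cases differing only by the harmless embedding $\Z\hookrightarrow\R$).

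Properness of $W$ is then immediate: $W^\circ$ is open and nonempty (as $\xi$ is non-constant, some skeleton position carries value $1$), so $\varnothing\neq W^\circ\subseteq\mathrm{int}(W)\subseteq W=\overline{W^\circ}\subseteq\overline{\mathrm{int}(W)}$, forcing $W=\overline{\mathrm{int}(W)}$. The heart of the argument is the identification $\partial W=C$. One inclusion is soft: $W^\circ\subseteq\mathrm{int}(W)$, while $U$ is open and disjoint from $W^\circ$, hence from $\overline{W^\circ}$, so $U\subseteq\varOmega\setminus W$; together these give $\partial W\subseteq C$. For the reverse inclusion I fix $\omega\in C$ and a basic neighbourhood $N=\pi_\ell^{-1}(r)$ with $r=\pi^{}_\ell(\omega)$. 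Since $\omega\notin\widehat A^{}_\ell\cup\widehat B^{}_\ell$, the residue $r$ is not $p^{}_\ell$-periodic, so the progression $r+p^{}_\ell\Z$ is non-constant and --- crucially using that the alphabet is binary --- carries both a position $n^{}_1$ with $\xi^{}_{n_1}=1$ and a position $n^{}_0$ with $\xi^{}_{n_0}=0$. Then $n_1^\star\in W^\circ\cap N$ and $n_0^\star\in U\cap N$, so every neighbourhood of $\omega$ meets both $W^\circ$ and $U$; hence $\omega\in\overline{W^\circ}\cap\overline{U}\subseteq W\setminus\mathrm{int}(W)=\partial W$. I expect this step --- pinning the boundary down as exactly the set of residues that never become periodic --- to be the main obstacle, since it is where the odometer topology, the filling property $\bigcup_\ell\Per(p^{}_\ell,\xi)=\Z$, and the binarity of $\xi$ genuinely interact.

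Finally, the measure of the boundary drops out of a counting argument. For each $\ell$ the cylinder $\widehat A^{}_\ell\cup\widehat B^{}_\ell=\pi_\ell^{-1}\bigl(\Per(p^{}_\ell,\xi)\cap[0,p^{}_\ell-1]\bigr)$ has Haar measure $\#\bigl(\Per(p^{}_\ell,\xi)\cap[0,p^{}_\ell-1]\bigr)/p^{}_\ell=D(p^{}_\ell)$, because $\pi^{}_\ell$ pushes $\mu$ to the uniform distribution on $\Z/p^{}_\ell\Z$. As these sets increase with $\ell$, continuity of $\mu$ yields $\mu(W^\circ\cup U)=\lim_{\ell\to\infty}D(p^{}_\ell)$, and therefore
\[
  \mu(\partial W)=\mu(C)=1-\lim_{\ell\to\infty}D(p^{}_\ell),
\]
as claimed.
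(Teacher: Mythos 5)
Your proof is correct, and the window you build is exactly the paper's: your $\widehat A_\ell$, $\widehat B_\ell$ are the paper's $U_\ell$, $V_\ell$ (you index cylinders by residues mod $p_\ell$ in the spirit of Remark~\ref{rem:alternative}, the paper by digit strings $w\in\prod_{i=1}^{\ell}\Z/q_i\Z$), your triple $W^\circ$, $U$, $C$ is the paper's $U$, $V$, $\varOmega\setminus(U\cup V)$, and the final measure count is identical. The genuine difference is in the hard step, $C\subseteq\partial W$. The paper fixes $\omega\in\varOmega\setminus V$ and a cylinder $[\omega]_\kappa$, splits into cases according to whether $k(\kappa,\omega_1,\ldots,\omega_\kappa)$ is $p_\kappa$-periodic, and in the non-periodic case constructs by hand a sub-cylinder $[v]\subseteq U\cap[\omega]_\kappa$ via arithmetic on digits ($k''=k+np_\kappa+mp_{\ell'}$, with an implicit check that $\ell'>\kappa$). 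You avoid this entirely: for $\omega\in C$ every truncated residue $r$ is non-periodic, binarity forces both symbols to occur along $r+p_\ell\Z$, and the dictionary $\xi_n=1\Leftrightarrow n^\star\in W^\circ$, $\xi_n=0\Leftrightarrow n^\star\in U$ --- which you establish first --- immediately places lattice points of both kinds in every cylinder neighbourhood, giving $C\subseteq\overline{W^\circ}\cap\overline{U}$ in one stroke and dispensing with the paper's symmetry step. What each buys: your route is shorter and makes the role of the lattice points transparent; the paper's route produces the finer identification $\mathrm{int}(W)=U$ along the way, which you replace by the cheaper (and perfectly adequate) remark that the closure of a nonempty open set is automatically proper. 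One presentational nit: when verifying $\oplam(W)=\cD(\xi)$ you appeal to the yet-to-be-proved identity $C=\partial W$, whereas all that is needed there --- and all you should cite, to keep the argument visibly non-circular --- is that $U$ is open and disjoint from $W^\circ$, hence disjoint from $W=\overline{W^\circ}$, a fact you state in the next paragraph anyway.
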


\begin{proof}
  We discuss the case $(\Z, \varOmega, \cL)$ with $\cL$ as in
  \eqref{eq:def-lat}; the other case is analogous, because we view
  $\cD(\xi)$ as a subset of $\Z$, so that the $\R$-action emerges from
  the $\Z$-action by a simple suspension with a constant height
  function.  In order to derive the window $W$ for the CPS, we denote
  cylinder sets in $\varOmega$ either by $[w]=[w^{}_1, \ldots ,
  w^{}_\ell]=\{\omega\in\varOmega\mid \omega_i=w_i \text{ for } 1
  \leqslant i\leqslant \ell\}$ with $w\in\prod_{i=1}^\ell \Z / {q_i}
  \Z$ or, given $\omega\in\varOmega$ and $\ell\in\N$, by
  $[\omega]^{}_\ell =[\omega^{}_1, \ldots ,\omega^{}_\ell]$. Note that
  $\mu \bigl([w]\bigr)=1/p^{}_\ell$.

Consider
\[
  A(\ell,s)\, = \, \Bigl\{ w\in\prod_{i=1}^\ell \Z / {q_i}\Z
  \, \Big| \,
  k(\ell,w) \text{ is a } p^{}_{\ell} \text{-periodic position of } \xi
  \text{ and } \xi_{k(\ell,w)} = s \Bigr\}
\]
with $s\in \{0,1 \}$. Then, define $U_\ell=\bigcup_{w\in A(\ell,1)}
[w]$ and $V_\ell=\bigcup_{w\in A(\ell,0)} [w]$. Clearly,
$U_\ell\subset U_{\ell +1}$ and $V_\ell\subset V_{\ell +1}$ hold for
any $\ell$.  Set $U=\bigcup_{\ell\in\N}U_\ell$ and
$V=\bigcup_{\ell\in\N}V_\ell$. Now, we let $W=\overline{U}$ and claim
that this window $W$ satisfies the assertions of our theorem.

First, we show  that our CPS $(\Z,\varOmega,\cL)$ together with the
window $W$ produces the Delone set $\cD(\xi)$ as its model set, that
is,
\[
   \oplam(W)\, := \, \{ n\in\Z \mid \tau^n(0)\in W\}
   \, = \, \cD(\xi) \ts .
\]
In order to do so, fix $k\in\Z$ and suppose $\xi_k=1$, so that
$k\in\cD(\xi)$. Let $\ell$ be the least integer such that $k$ is a
$p^{}_\ell$-periodic position of $\xi$. Then, there exists a unique
$k'\in[0,p^{}_\ell-1]$ such that $k'=k+n \ts p^{}_\ell$ for some
$n\in\Z$.  This $k'$ is a $p^{}_\ell$-periodic position as well and we
also have $\xi_{k'}=1$. Further, we have $k'=k(\ell,w)$ for a unique
$w\in \prod_{i=1}^\ell \Z/{q_i}\Z$. However, this means that we have
$[w]\subseteq U\subseteq W$ by construction. Since $\tau^m(0)\in[w]$
for all $m\in k(\ell,w)+p^{}_\ell\Z$ (note that $\tau^{k(\ell,w)} (0)
=(w,0,0,\ldots)$ and any cylinder of length $\ell$ is
$p^{}_\ell$-periodic for $\tau$), we in particular have that
$\tau^k(0)\in W$, so that $k\in \oplam(W)$. In a similar way, we
obtain that $\xi_k=0$ implies $\tau^k(0)\in V$ and thus $k\notin
\oplam(W)$ (note here that $V$ is open, so that $V\subseteq
\varOmega\setminus W$). This proves $\oplam(W)=\{k\in\Z\mid
\xi_k=1\}=\cD(\xi)$.

Next, we determine the measure of $\partial W$ (and  obtain
properness as a byproduct). We have
\[
   \# \bigl(A(\ell,0)\cup A(\ell,1)\bigr)\, = \,
   \#\{k\in [0,p^{}_\ell-1]\mid
   k \textrm{ is a } p^{}_\ell\textrm{-periodic position}\}
   \, = \, D(p^{}_\ell)\cdot p^{}_\ell \ts .
\]
This means that $\mu(U_\ell\cup V_\ell) = D(p^{}_\ell)$, and thus
\[
   \mu(U\cup V)\, = \lim_{\ell\to\infty} \mu(U_\ell\cup V_\ell)
   \, = \lim_{\ell\to\infty} D(p^{}_\ell) \ts .
\]
Thus, it suffices to show that $\partial W=\varOmega\setminus (U\cup
V)$. By openness and disjointness of $U$ and $V$, this is equivalent to
$(W= \, ) \, \overline{U}= \varOmega\setminus V$ and $\overline{V}
=\varOmega\setminus U$. As the situation is symmetric, we restrict to
prove $W=\varOmega\setminus V$. The inclusion $W\subset \varOmega
\setminus V$ is clear. It remains to show the opposite inclusion. To
that end, fix $\omega\in \varOmega\setminus V$ and $\kappa\in \N$.  We
are going to show that $U$ intersects every cylinder neighbourhood
$[\omega]_\kappa$ of $\omega$, so that $\omega\in \overline{U}=W$.

As $\bigcup_{\ell\in\N} \Per(p^{}_\ell,\omega)=\Z$, there exists a
least integer $\ell$ such that $k=k(\kappa,\omega_1, \ldots
,\omega_\kappa)$ is a $p^{}_\ell$-periodic position. First, suppose
that $\ell\leq \kappa$. Then, $k$ is a $p_\kappa$-periodic position,
and we have $[\omega]_\kappa\subseteq U_\kappa$ if $\xi^{}_k=1$ and
$[\omega]_\kappa\subseteq V_\kappa$ if $\xi^{}_{k}=0$. The latter is
not possible, since we assume $\omega\notin V$. Hence, we have that
$[\omega]_\kappa\subseteq U$.  Secondly, suppose that $\ell
>\kappa$. Then, $k$ cannot be a $p_\kappa$-periodic position, and
hence there exists $n\in\Z$ such that $k'=k+np_\kappa$ satisfies
$\xi_{k'}=1$. Choose the least $\ell'\in\N$ such that $k'$ is a
$p^{}_{\ell'}$-periodic position and let $v\in\prod_{i=1}^{\ell'}
\Z/{q_i}\Z$ be such that $k'=k(\ell',v)\bmod p^{}_{\ell'}$. Set
$k''=k(\ell',v)$. By construction, we have $[v]\subseteq
U_{\ell'}\subseteq U$. At the same time, we have $v_i=\omega_i$ for
all $1 \leqslant i \leqslant \kappa$, since
$k''=k+np_\kappa+mp^{}_{\ell'}$ for some $m\in\Z$. Hence, we have that
$[v]\subseteq [\omega]_\kappa$ and thus $U\cap [\omega]_\kappa\neq
\varnothing$.

As mentioned already, the argument is completely symmetric with
respect to $U$ and $V$, and we also obtain
$\overline{V}=\varOmega\setminus U$. This then  implies that
$U=\mathrm{int} (W)$, and since $W=\overline{U}$ by definition, we
obtain that $W$ is proper.
\end{proof}

\section{Regular Toeplitz flows}\label{section:regular}

For the case of \emph{regular} Toeplitz sequences, more information is
available to relate the scaling behaviour of $D(p^{}_\ell)$ to the box
dimension of the boundary of the window.  In order to state the
result, we assume that $d$ is a metric on $\varOmega$ that generates
the product topology and is invariant under the group rotation
$\tau$. Note that since cylinder sets in $\varOmega$ are mapped to
cylinder sets, and $\tau$ is transitive on $\varOmega$, all cylinder
sets of a given level $\ell$ have the same diameter $d_\ell$. The
choice of the sequence $d_\ell$ defines the metric and is more or less
arbitrary, as long as it is decreasing in $\ell$. The box dimension of
$\varOmega$ depends on this choice and is given by
\[
   \mathrm{Dim}^{}_B(\varOmega) \, = \ts \lim_{\ell\to\infty}
   \frac{\log (p^{}_\ell)}{ \log (d^{}_\ell)} \ts .
\]
If this limit does not exist, then one defines upper and lower box
dimension $\overline{\mathrm{Dim}}_B(\varOmega)$ and
$\underline{\mathrm{Dim}}_B(\varOmega)$ by using the limit superior,
respectively inferior. We also note that the canonical choice for the
metric $d$ is given by $d^{}_\ell=p^{-1}_{\ell+1}$, but our statement is
valid in general.

\begin{thm}\label{t.box-dimension}
  Suppose that, in the situation of Theorem~$\ref{t.Toeplitz-CPS}$, we
  have\/ $\lim_{\ell\to\infty} D(p^{}_\ell)=1$. Then, the window\/ $W$
  can be chosen such that
\begin{align*}
  \overline{\mathrm{Dim}}^{}_{B}(\partial W) \, & = \,
  \left(1+\varlimsup_{\ell\to\infty}
  \frac{\log(1-D(p^{}_\ell))}{\log (p^{}_\ell)}\right)
  \overline{\mathrm{Dim}}^{}_B(\varOmega)
\intertext{and}
  \underline{\mathrm{Dim}}^{}_{B}(\partial W) \, & = \,
  \left(1+\varliminf_{\ell\to\infty}
  \frac{\log(1-D(p^{}_\ell))}{\log (p^{}_\ell)}\right)
  \underline{\mathrm{Dim}}^{}_B(\varOmega) \ts .
\end{align*}

\end{thm}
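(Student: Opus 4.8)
The plan is to compute both box dimensions purely combinatorially, by counting cylinder sets. The level-$\ell$ cylinders partition $\varOmega$ into $p_\ell$ pieces of common diameter $d_\ell$, so for any $S\subseteq\varOmega$ the $d_\ell$-covering number of $S$ equals, up to a bounded factor, the number of level-$\ell$ cylinders meeting $S$; hence $\overline{\mathrm{Dim}}_B(S)$ and $\underline{\mathrm{Dim}}_B(S)$ are the $\varlimsup$ and $\varliminf$ of $\log\#\{[w]:[w]\cap S\neq\varnothing\}/\log(1/d_\ell)$. First I would record, from the proof of Theorem~\ref{t.Toeplitz-CPS}, that $\partial W=\varOmega\setminus(U\cup V)$ is closed and that, since $U\cup V$ is a union of cylinders, a level-$\ell$ cylinder $[w]$ meets $\partial W$ precisely when $[w]\not\subseteq U\cup V$. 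Every such cylinder is one of the $M_\ell:=p_\ell-\#\bigl(A(\ell,0)\cup A(\ell,1)\bigr)=\bigl(1-D(p_\ell)\bigr)p_\ell$ \emph{undetermined} cylinders, i.e.\ those indexed by non-$p_\ell$-periodic positions. Writing $N_\ell$ for the number of level-$\ell$ cylinders that actually meet $\partial W$, the whole theorem reduces to showing $\log N_\ell\sim\log M_\ell$ in the relevant $\varlimsup/\varliminf$ sense.

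The upper bound is the easy half: since $\partial W$ is covered by the undetermined cylinders, $N_\ell\le M_\ell$ for all $\ell$. Writing $\log M_\ell=\log p_\ell\bigl(1+\tfrac{\log(1-D(p_\ell))}{\log p_\ell}\bigr)$ and dividing by $\log(1/d_\ell)$, the factor $\log p_\ell/\log(1/d_\ell)$ has upper and lower limits $\overline{\mathrm{Dim}}_B(\varOmega)$ and $\underline{\mathrm{Dim}}_B(\varOmega)$, and for the canonical metric $d_\ell=p_{\ell+1}^{-1}$ (or whenever $\mathrm{Dim}_B(\varOmega)$ exists) it converges and pulls out of the limit. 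Taking $\varlimsup$ and $\varliminf$ then gives the inequalities ``$\le$'' in both displayed formulas, the factorisation into the stated product being legitimate exactly because the $\varOmega$-dimension factor is a genuine limit for the metric at our disposal.

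The lower bound is where the real work lies, and I expect it to be the main obstacle. One cannot simply assert $N_\ell=M_\ell$: an undetermined cylinder $[w]$ may become \emph{extinct}, meaning that at some later level all of its refinements correspond to $p_{\ell'}$-periodic positions, so that $[w]\subseteq U\cup V$ and $[w]\cap\partial W=\varnothing$. This genuinely happens --- for instance when $k$ is not $p_\ell$-periodic yet each of $k,k+p_\ell,\dots,k+(q_{\ell+1}-1)p_\ell$ is $p_{\ell+1}$-periodic --- so $N_\ell$ may be strictly smaller than $M_\ell$, and the content of the theorem is that such extinctions do not change the exponential growth rate. To control them I would exploit two structural features: that $\xi$ is a genuine Toeplitz sequence, so every integer position is eventually periodic and the cylinders meeting $\partial W$ are exactly those carrying points $\omega$ with $k(\ell,\omega)\to\infty$; and the minimality (repetitivity) of $(\varSigma_\xi,\sigma)$, which forces the surviving undetermined positions to recur syndetically. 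Concretely, along the levels $\ell_j$ realising $\varlimsup\log M_\ell/\log(1/d_\ell)$ I would aim to prove $N_{\ell_j}\ge M_{\ell_j}^{1-o(1)}$, either by bounding directly the number of cylinders going extinct within a controlled look-ahead window, or by placing a mass distribution on $\partial W$ (spreading unit mass over the surviving cylinders at each level) and invoking the mass-distribution principle for a matching lower estimate. Combining this with the covering bound yields $\log N_\ell\sim\log M_\ell$ along the relevant subsequences, and the two equalities then follow by the same factorisation as above, now with equality throughout.
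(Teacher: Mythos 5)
Your framework and your upper-bound half coincide with the paper's argument: the paper also covers $\partial W$ by the $N(\ell)=(1-D(p^{}_\ell))\cdot p^{}_\ell$ undetermined cylinders of level $\ell$ and reads the dimension off this count. The genuine gap is that your lower bound --- which you yourself call ``the real work'' --- is never carried out. Announcing that you would aim to prove $N_{\ell_j}\ge M_{\ell_j}^{1-o(1)}$, either by bounding extinctions within a look-ahead window or by a mass-distribution argument, is a plan, not a proof: no estimate on the number of extinct cylinders is established, and it is not explained how minimality or the Toeplitz property would produce one (minimality constrains which patterns recur, not how many undetermined residue classes survive to the next level). As it stands, your proposal establishes only the inequalities ``$\le$'' in the two displayed formulas, and even there the factorisation into the stated product silently needs the factor $\log p^{}_\ell/\log(1/d^{}_\ell)$ to converge, which the canonical choice $d^{}_\ell=p^{-1}_{\ell+1}$ does not guarantee when the $q^{}_\ell$ are wildly varying.

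For comparison, the paper closes the critical step in one sentence: it asserts that $\partial W$ cannot be covered by fewer than $N(\ell)$ cylinders of level $\ell$, i.e.\ that \emph{every} undetermined cylinder meets $\partial W$, so that $N(\ell)$ is exactly the covering number at scale $d^{}_\ell$. That is precisely the statement you decline to assert, and your scepticism is well founded: the extinction mechanism you describe is real. If $k$ is not a $p^{}_\ell$-periodic position but every position $k+np^{}_\ell$ with $0\le n< p^{}_{\ell+1}/p^{}_\ell$ is $p^{}_{\ell+1}$-periodic (with non-constant values, which is exactly what forces $k\notin\Per(p^{}_\ell,\xi)$), then the corresponding cylinder lies in $U_{\ell+1}\cup V_{\ell+1}$ and misses $\partial W$; nothing in the definition of a period structure excludes this, and by iterating it one can even make all but one undetermined cylinder die at each level, so that $\partial W$ degenerates to a single point while $1-D(p^{}_\ell)$ decays only like a power of $1/p^{}_\ell$. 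So the identity the paper treats as obvious is exactly the point where a genuine argument (or an extra hypothesis, or a more careful choice of period structure and window) is required. In short: you have correctly isolated the step on which the whole theorem hinges --- more explicitly than the paper does --- but you do not prove it, so your proposal does not establish the theorem.
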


\begin{proof}
  The construction in the proof of Theorem~\ref{t.Toeplitz-CPS} is completely
  independent of the value of $\lim_{\ell\to\infty} D(p^{}_\ell)$, and in
  particular applies also to the regular case. Hence, we may choose the same
  window $W$ as above and need only to determine the box dimension of $\partial
  W$.

To that end, note that $\partial W=\varOmega\setminus(U\cup V)$ and
$U_\ell\cup V_\ell\subseteq (U\cup V)$, so that $\varOmega\setminus
(U_\ell\cup V_\ell)$ contains $\partial W$. However, we have that
\[
  \varOmega\setminus(U_\ell\cup V_\ell) \ = \
  \bigcup_{w\in\Z^\ell\setminus A(\ell,0)\cup A(\ell,1)} [w] \ ,
\]
so that this set is a union of $N(\ell)=(1-D(p^{}_\ell))\cdot
p^{}_\ell$ cylinders of order $\ell$. Moreover, it is not possible to
cover $\partial W$ with a smaller number of such cylinders, so that
$N(\ell)$ is the least number of sets of diameter $d_\ell$ needed to
cover $\partial W$. Hence, we obtain
\[
  \overline{\mathrm{Dim}}_B(\partial W) \ = \
  \varlimsup_{\ell\to\infty}
  \frac{\log((1-D(p^{}_\ell))\cdot p^{}_\ell)}{\log d_\ell} \ =
  \ \left(1+\varlimsup_{\ell\to\infty}
  \frac{\log(1-D(p^{}_\ell))}{\log p^{}_\ell}\right)
 \cdot \overline{\mathrm{Dim}}_B(\varOmega) \ .
\]
The analogous computation yields the relation for the lower box
dimensions.
\end{proof}

\begin{remark}
Let us point out some further properties and directions as follows.

(a) As a consequence of our main theorem, a Toeplitz sequence is
regular if and only if the associated model set is regular.

(b) As  is well-known, regular Toeplitz flows are almost one-to-one
extensions of their MEF; cf.\ \cite{DL96}. In particular, they
are uniquely ergodic and have pure point dynamical spectrum with
continuous eigenfunctions, which separate almost all
points. Therefore, the general characterisation of dynamical systems
coming from regular models sets provided in \cite{BLM} directly
applies to provide a model set construction for these systems.

(c)  As  regular Toeplitz flows have pure point dynamical spectrum,
they also exhibit pure point diffraction by the general equivalence
theorem, see \cite{BL} and references therein for details and
background. As they are uniquely ergodic, each individual sequence of
the flow then exhibits the same pure point diffraction. So, the
method of \cite{BM} can be used to provide a CPS for a regular
Toeplitz sequence. This leads to an alternative, but equivalent,
way in the spirit of Remark~\ref{rem:alternative}, as can be seen
from the example of the period doubling chain in \cite{BM}.
\exend
\end{remark}

\section{Consequences for irregular Toeplitz
flows}\label{section:irregular}

There has been quite some speculation on connections between
irregularity of the model set and occurrence of positive entropy or
failure of unique ergodicity for the associated dynamical systems.
Indeed,  Schlottmann  asks whether irregularity of the model set
implies failure of unique ergodicity \cite{Schlottmann} and Moody
has suggested that irregularity is related to positive entropy. The
suggestion of Moody is recorded in  \cite{Pleasants} (later subsumed
in \cite{PleasantsHuck}) and is also discussed in  the introduction
to \cite{HR}. When combined with examples of Toeplitz systems
studied in the past, our main theorem allows us to answer these
speculations by presenting model sets with various previously
unknown features (such as irregularity combined with unique
ergodicity and zero entropy). This is discussed in this section. Let
us emphasise that all Toeplitz flows are minimal and that the model
sets presented below are even proper (as Theorem
\ref{t.Toeplitz-CPS} provides proper model sets).

\smallskip

Arguably among the most interesting examples in our present context
are irregular Toeplitz flows with zero entropy \cite{Ox52}, as these
immediately imply the following statement.
\begin{cor}
  Positive measure of the boundary of a window of a CPS is not a
  sufficient criterion for positive topological entropy of the
  associated Delone dynamical system.  \qed
\end{cor}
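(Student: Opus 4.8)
The plan is to combine Theorem~\ref{t.Toeplitz-CPS} with a known example of an irregular Toeplitz flow that has zero topological entropy. By the theorem, any non-periodic Toeplitz sequence $\xi$ gives rise to a proper model set $\cD(\xi)$ for the CPS $(\Z,\varOmega,\cL)$, and the measure of the boundary of the corresponding window satisfies $\mu(\partial W)=1-\lim_{\ell\to\infty}D(p^{}_\ell)$. Thus, if we select an irregular Toeplitz sequence, meaning $\lim_{\ell\to\infty}D(p^{}_\ell)<1$, the resulting window automatically has a boundary of \emph{positive} Haar measure in $\varOmega$. This gives us, for free, a CPS whose window has positive boundary measure.

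The next step is to invoke the existence of irregular Toeplitz flows with zero topological entropy, for instance those constructed by Oxtoby~\cite{Ox52} as referenced above. I would first verify that such an example is indeed irregular in the sense defined in Section~\ref{section:main} (i.e.\ $\lim_{\ell\to\infty}D(p^{}_\ell)<1$ for some, equivalently any, period structure), which is precisely the feature that forces $\mu(\partial W)>0$. I would then note that the topological entropy of the Delone dynamical system $(\oplam(W)+t)_{t}$ associated with the model set agrees with the topological entropy of the generating Toeplitz flow $(\varSigma_\xi,\sigma)$, since the correspondence $\xi\mapsto\cD(\xi)$ is a topological conjugacy between the Toeplitz flow and the hull of the Delone set (the suspension in the $\R$-case contributes no additional entropy, as it is a suspension by a constant height function). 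Consequently the Delone dynamical system inherits zero entropy from the chosen Toeplitz flow.

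Putting these two observations together yields the corollary directly: we exhibit a CPS and a proper window $W$ with $\mu(\partial W)>0$, yet the associated Delone dynamical system has zero topological entropy. This shows that positive boundary measure cannot imply positive entropy, as claimed.

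The only genuine obstacle is the identification of entropy across the conjugacy: one must be sure that passing from the symbolic Toeplitz flow to the Delone/model-set picture preserves topological entropy. In the $\Z$-action this is immediate, since $\cD(\cdot)$ is a homeomorphism intertwining the shift on $\varSigma_\xi$ with the translation action on the hull of $\cD(\xi)$, and topological entropy is a conjugacy invariant; in the $\R$-action one additionally uses that a suspension with constant roof function has the same topological entropy as the base map. Everything else is a matter of citing the irregular zero-entropy example and reading off $\mu(\partial W)>0$ from Theorem~\ref{t.Toeplitz-CPS}.
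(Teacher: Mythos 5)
Your proposal is correct and follows exactly the paper's route: the corollary is stated as an immediate consequence of Theorem~\ref{t.Toeplitz-CPS} applied to an irregular zero-entropy Toeplitz flow as in \cite{Ox52}, with positivity of $\mu(\partial W)$ read off from the theorem's formula. You merely make explicit the entropy-transfer step (conjugacy of the shift with the translation action on the hull of $\cD(\xi)$, and invariance under suspension with constant roof) that the paper leaves implicit.
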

We note that there even exist irregular Toeplitz flows (and thus
irregular model sets) whose word complexity is only linear
\cite{GJ15}.  A number of further interesting examples are available
in the literature. Amongst these are the following.
 \begin{itemize}
 \item Irregular Toeplitz flows may be uniquely ergodic \cite{Wil84},
   so that a window for a CPS with a boundary of positive measure does
   \emph{not} contradict unique ergodicity of the resulting Delone
   dynamical system. Moreover, the set of ergodic invariant measures
   of an irregular Toeplitz flow may have any cardinality.
 \item Any countable subgroup of $\R$ that contains infinitely many
   rationals can be the dynamical spectrum of a Toeplitz flow
   \cite{DL96}, and thus of the Delone dynamical system arising from a
   CPS. As all continuous eigenvalues of a Toeplitz flow are rational,
   this gives in particular examples of model sets with (many)
   measurable eigenvalues (see \cite{DFM} as well for a recent study
   of Toeplitz systems of finite topological rank with measurable
   eigenvalues).
 \item Oxtoby's original example of a Toeplitz flow with zero entropy
   in \cite{Ox52} is not uniquely ergodic, and the same is true of the
   example in \cite{GJ15}. However, there also exist uniquely ergodic
   irregular Toeplitz flows both with and without positive entropy
   \cite{Wil84}. In particular, there exist irregular model sets with
   uniquely ergodic minimal dynamical systems with zero entropy.
 \end{itemize}

\begin{remark}
  There is an emerging theory of weak model sets (see, for instance,
  \cite{BHS,HR,KR,JLO}) dealing with irregular windows for a given CPS. If
  the arising model sets satisfies a maximality condition for its
  density, then the associated dynamical systems will generally not be
  minimal. Thus, the irregular Toeplitz sequences are, in this sense,
  never weak model sets of maximal density. They rather provide a
  versatile class of examples to explore the possibilities that emerge
  from missing out on the maximality property.  \exend
\end{remark}
\bigskip

\section*{Acknowledgements}

The presented ideas originated from discussions during the workshop
\emph{Dynamical Systems and Dimension Theory}, 8--12 September 2014,
W\"oltingerode, Germany. This was supported by the `Scientific
Network: Skew product dynamics and multifractal analysis' (DFG-grant
Oe 538/3-1). TJ acknowledges support by the Emmy-Noether program and
the Heisenberg program of the DFG (grants Ja 1721/2-1 and Oe 538/6-1).
Further, this work was also supported by the German Research
Foundation (DFG), within the CRC 701.
\bigskip

\end{document}